\numberwithin{equation}{section}
\newcommand{\ra}[1]{\mathrm{rank}({#1})}
\newcommand{\tr}[1]{\mathrm{Tr}({#1})}
\newcommand{\ind}[1]{\mathrm{ind}({#1})}
 \newtheorem{thm}{Theorem}[section]
 \newtheorem{lem}{Lemma}[section]
 \newtheorem{exm}{Example}[section]
\begin{document}
\centerline {\LARGE{\bf Highly Efficient Computation of 
Generalized Inverse of a Matrix}}
\centerline{}
\centerline{}
\centerline{
V.Y. Pan $^{a,c,}$\footnote{Email: victor.pan@lehman.cuny.edu; 
http://comet.lehman.cuny.edu/vpan/ \\
supported by NSF Grant CCF--1116736
and PSC CUNY Award  68862--00 46},
F. Soleymani $^{b,}$\footnote{Corresponding author. Email: fazl\_soley\_bsb@yahoo.com},
L. Zhao $^{c,}$\footnote{Email: lzhao1@gc.cuny.edu}
}
{\footnotesize
\centerline{$^{a}$ Department of Mathematics and Computer Science, Lehman College of CUNY, Bronx, NY 10468, USA}
\centerline{$^{b}$ Instituto Universitario de Matem\'{a}tica Multidisciplinar, Universitat Polit\`{e}cnica de Val\`{e}ncia, 46022 Val\`{e}ncia, Spain}
\centerline{$^{c}$ Departments of Mathematics and Computer Science, The Graduate Center of CUNY, New York, NY 10036 USA}
}

\medskip
\noindent {\bf Abstract.}
We propose a hyperpower iteration for numerical computation of the outer generalized inverse of a matrix
which achieves the 18th order of convergence by using only seven matrix multiplication per iteration loop. This is the record high efficiency for that computational task.
The algorithm has a relatively mild numerical instability, and we stabilize it at the price of adding 
one extra matrix multiplication per iteration loop. This imlplies an efficiency index that  significantly exceeds  the known record for numerically stable iterations for this task.
Our numerical tests cover a variety of examples such as Drazin case, rectangular case, and preconditioning of linear systems. The test results are in good accordance 
with our formal study and indicate that our algorithms can be of interest for the user. 
\medskip

\noindent{\bf 2010 MSC:} 15A09; 65F30; 15A23.
\smallskip

\noindent{\bf Keywords:} Generalized inverses; hyperpower method; Moore-Penrose inverse; convergence analysis; Drazin inverse.
\centerline{}

\section{Our Subject, Motivation, Related Works, and Our Progress}
\subsection{Generalized inverses: some applications}
It has been stated already by Forsythe et al. \cite[p. 31]{Forsythe} that in the great majority of practical computational problems, it is unnecessary and inadvisable to actually compute the inverse of a nonsingular matrix.
This general rule still remains essentially true for modern matrix computations (see, e.g., \cite[pages 39 and 180]{Stewart}).
In contrast the computation or approximation of {\em generalized inverses} is required in some important  matrix computations (cf., e.g., \cite{Nashed}).
For example, generalized inverses are used for preconditioning large scale linear systems of equations \cite{Benzi,Chow} and \cite[pp. 171-208]{Campbell} and
 for updating the regression estimates based on the addition or deletion of the data
in linear regression analysis \cite[pp. 253-294]{Ben-Israel}. 
Furthermore  the computation of the so-called zero initial state system
inverses for linear time-invariant state-space systems is essentially equivalent to determining generalized inverses of the matrices of the associated transfer functions.

There exists a generalized inverse of an arbitrary matrix, and it turns into a unique inverse when the matrix is nonsingular, but we must  compute generalized inverses in order to  deal with rectangular and rank deficient matrices \cite{Soleimani,Stanimirovic2013-2}. Some generalized inverses can be defined in any mathematical structure that involves associative multiplication, i.e., in a semigroup \cite[chapter 1]{Wang}.

A system $Ax=b$ of linear equations has a solution 
if and only if the vector $A^{\dag}b$ is a solution,
and if so, 
then  all solutions are given by the following expression:
\begin{equation}
x=A^{\dag}b + [I-A^{\dag}A]w,
\end{equation}
where we can choose  an arbitrary vector  $w$ and any generalized inverse $A^{\dag}$. 

\subsection{Outer generalized inverse}
Hereafter $\mathbb{C}^{m\times n}$  denotes the set of all complex $m\times n$ matrices, $\mathbb{C}_r^{m\times n}$ denotes the set of all complex $m\times n$ matrices of rank $r$, $I_m$ denotes the $m\times m$ identity matrix, and we drop the subscript if the dimension $m$ is not important or is clear from context. Furthermore $A^*$, $R(A)$, and $N(A)$ denote the conjugate (Hermitian) transpose, the Range, and the Null Space of a matrix $A$ $\in$ $\mathbb{C}^{m\times n}$, respectively. 

For $A$ $\in$ $\mathbb{C}^{m\times n}$, outer generalized inverses or $\{2\}$-inverses are defined 
 \cite{Ben-Israel} by
\begin{equation}\label{1}
A\{2\} = \{X \in \mathbb{C}^{n \times m} : XAX=X\}.
\end{equation}
For two fixed subspaces $S\in \mathbb{C}^{n}$ and  $T\in \mathbb{C}^{m}$, define
the generalized inverse $ A_{T,S}^{(2)} \in A\{2\}$ of a complex matrix $A \in \mathbb{C}^{m \times n}$ as the matrix $X \in \mathbb{C}^{n \times m}$ such that $R(X)=T$ and $N(X)=S$.

\begin{lem}
 Let a matrix $A\in\mathbb{C}^{m \times n}$ have rank $r$ and let $T$ and $S$ be subspaces of $\mathbb{C}^{n}$ and $\mathbb{C}^{m}$, respectively, with $dimT=dim S^{\perp}=t\leq r$. Then
 $A$ has a \{2\}$\textendash$inverse $X$ such that $R(X)=T$ and $N(X)=S$ if and only if
\begin{equation}\label{2}
AT \bigoplus S =\mathbb{C}^{m},
\end{equation}
in which case $X$ is unique and is denoted by $A_{T,S}^{(2)}$ (see, e.g., \cite{Wei3}).
\end{lem}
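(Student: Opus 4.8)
The plan is to route everything through the two idempotents $AX$ and $XA$ attached to any $\{2\}$-inverse $X$: the defining identity $XAX=X$ makes both of them projections and pins down their ranges and null spaces, which is all the structure the statement needs. \textbf{Necessity.} Suppose $X$ satisfies $XAX=X$, $R(X)=T$, and $N(X)=S$. First I would record that $P:=AX$ is idempotent, since $P^{2}=A(XAX)=AX=P$, whence $\mathbb{C}^{m}=R(P)\oplus N(P)$. Then I would identify the two summands: $R(P)=R(AX)=A\big(R(X)\big)=AT$, while $N(P)=N(X)$, because $N(X)\subseteq N(AX)$ is immediate and, conversely, $AXy=0$ forces $Xy=XAXy=0$. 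Substituting gives $AT\oplus S=\mathbb{C}^{m}$, which is exactly \eqref{2}.

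\textbf{Sufficiency and construction.} Conversely, assume \eqref{2}. Since $\dim S=m-\dim S^{\perp}=m-t$, the direct sum forces $\dim AT=t=\dim T$, so the restriction $A|_{T}\colon T\to AT$ is a surjection between spaces of equal dimension, hence a bijection; let $B$ denote its inverse. Let $P_{0}$ be the projection of $\mathbb{C}^{m}$ onto $AT$ along $S$, well defined by \eqref{2}, and put $X:=BP_{0}\in\mathbb{C}^{n\times m}$. I would then verify the three defining properties: $R(X)=B(AT)=T$; $N(X)=N(P_{0})=S$, using injectivity of $B$; and $AX=P_{0}$, because $AB$ is the identity map of $AT\supseteq R(P_{0})$, so that $XAX=XP_{0}=BP_{0}P_{0}=BP_{0}=X$. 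Hence $X\in A\{2\}$ has the prescribed range and null space, so $A^{(2)}_{T,S}$ exists.

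\textbf{Uniqueness.} If $X_{1}$ and $X_{2}$ are both $\{2\}$-inverses with range $T$ and null space $S$, then by the necessity computation each $AX_{i}$ is idempotent with range $AT$ and null space $S$, hence equals the (unique) projection onto $AT$ along $S$; therefore $AX_{1}=AX_{2}$. Symmetrically, each $X_{i}A$ is idempotent with range $T$ (using $X_{i}=X_{i}AX_{i}$) and null space $\{y\in\mathbb{C}^{n}\colon Ay\in S\}$, hence $X_{1}A=X_{2}A$. Combining, $X_{1}=X_{1}AX_{1}=(X_{2}A)X_{1}=X_{2}(AX_{1})=X_{2}(AX_{2})=X_{2}AX_{2}=X_{2}$.

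I do not anticipate a genuine obstacle here: the whole argument is short once one exploits the identity $XAX=X$ together with the fact that an idempotent operator is determined by its range and null space. The single point that needs care is the dimension count in the sufficiency direction, where the hypothesis $\dim T=\dim S^{\perp}=t$ is precisely what upgrades the tautological surjection $A|_{T}\colon T\to AT$ to a bijection and thereby makes the candidate matrix $X=BP_{0}$ well defined.
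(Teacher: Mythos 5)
Your proof is correct. Note, however, that the paper does not prove this lemma at all --- it is quoted as a known result with a pointer to the reference [Wei--Wu, \emph{Appl. Math. Comput.} 135 (2003)] --- so there is no in-paper argument to compare yours against. What you give is the standard self-contained argument: the identity $XAX=X$ makes $AX$ and $XA$ idempotent, necessity and uniqueness follow from the fact that an idempotent is determined by its range ($AT$, resp.\ $T$) and null space ($S$, resp.\ $A^{-1}S$), and sufficiency comes from the dimension count $\dim AT=m-\dim S=\dim S^{\perp}=\dim T$, which lets you invert $A|_{T}$ and set $X=(A|_{T})^{-1}P_{0}$ with $P_{0}$ the projection onto $AT$ along $S$. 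All steps check out, including the small but essential verifications $N(AX)=N(X)$ and $R(X_iA)=T$.
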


The traditional generalized inverses, e.g., the pseudo-inverse $A^\dag$ (a.k.a. Moore-Penrose inverse), the weighted Moore-Penrose inverses $A_{MN}^\dag$ (where $M$ and $N$ are two square Hermitian positive definite
matrices), the Drazin-inverse  $A^D$, the group inverse $A^\#$, the Bott-Duffin inverse $A_{L}^{-1}$ \cite{Sheng2012}, the generalized Bott-Duffin inverse $A_{L}^{\dag}$, and so on,
 each of special interest in matrix theory,
 are special cases of the generalized outer inverse $X=A_{T,S}^{(2)}$.
\subsection{The known iterative algorithms for generalized inverses}
A number of  direct
and iterative methods has been proposed and implemented  for the computation of generalized inverses (e.g., see \cite{Pan2006,Schreiber}). Here we consider iterative methods.
 They approximate generalized inverse preconditioners,
can be implemented efficiently
in parallel architecture, converge particularly fast in some special cases (see, e.g., \cite{LiuLAA}),
and 
 compute various generalized inverses by using the same procedure for different input matrices, while direct methods usually require much more computer time and space in order to achieve such results.

Perhaps the most general and well-known scheme in this category is the
following hyperpower iterative family of matrix methods \cite{Climent,SoleymaniLAA,Sticrel},
\begin{equation}\label{Hyperpower1}
X_{k+1}=X_k(I+R_k+\cdots+R_k^{p-1})=X_k\sum\limits _{i=0}^{p-1}R_k^i,\ \ \ R_k=I-AX_k,\qquad k\geq0.
\end{equation}
Straightforward implementation of the iteration (\ref{Hyperpower1}) of  order $p$ involves $p$ matrix-matrix products.
For  $p=2$ it
turns into the Newton-Schulz-Hotelling matrix iteration (SM), originated in \cite{Hotelling1943,Schulz}:
\begin{equation}\label{schulz}
X_{k+1}=X_k(2I-AX_k),
\end{equation}
and for $p=3$ into the cubically convergent method of Chebyshev-Sen-Prabhu (CM) \cite{Sen}:
\begin{equation}\label{chebyshev}
X_{k+1}=X_k(3I-AX_k(3I-AX_k)).
\end{equation}

The paper \cite{SoleymaniNUMA} proposed the following
 seventh-order factorization (FM)
for computing outer generalized inverse with prescribed range and null space 
assuming
an appropriate initial matrix 
$X_0$ (see Section 4 for its choices):
\begin{equation}\label{SoleymaniN}
\left\{ \begin{array}{l}
\psi_k=I-AX_k,\\
\zeta_k=I+\psi_k+\psi_k^2,\\
\upsilon_k=\psi_k+\psi_k^4,\\
X_{k+1}=X_k(I+\upsilon_k\zeta_k).
\end{array}\right.
\end{equation}
Chen and Tan \cite{Chen} proposed computing $A_{T,S}^{(2)}$ by iterations based on splitting matrices.

For further background of iterative methods for computing generalized inverses, one may consult \cite[pp. 82-84]{Isaacson}, \cite[chapter 1]{NashedBook}, 
\cite{Nashed}, \cite{Ben-Israel}, \cite{Campbell},  \cite{Zheng}. Ben-Israel \cite{Ben-Israel2}, Pan \cite{Pan2010} and Sticrel \cite{Sticrel} have presented general introductions into iterative methods for computing $ A_{T,S}^{(2)}$. Recently such methods have  been studied extensively together with their applications (see, e.g., \cite{Codevico,Liu2013,Petkovic2}).

\subsection{Our results}
Our main results are two new  
 algorithms in the form (\ref{Hyperpower1}) for the generalized matrix inverse. 
They involve only 7 and 8 matrix-by-matrix products, respectively, 
and both of them achieve the convergence rate of 18. 
The efficiency index of the first algorithm (involving seven products) is record high,
but the algorithm is numerically unstable, although mildly.
Our second algorithm, using one extra matrix-by-matrix multiplication,  is numerically stable.
Its efficiency index is substantially higher than the previous record among 
numerically stable iterations for the same task.
Our numerical tests showed that our algorithms are quite competitive and in most cases superior to the known algorithms in terms of the CPU time involved.
All this should make our study theoretically and practically interesting.

\subsection{Organization of the  paper}

 In Section 2 we present our new algorithm. Its convergence and error analysis are the subjects of Section 3. In Section 4
we comment on the choice of the choice of 
an initial approximate inverse.
In Section 5
we  discuss its computational efficiency, while Section 6 is devoted to the analysis of its numerical stability. In Section 7 we present our second, numerically stable algorithm. Numerical tests, including the Drazin case, rectangular case, and preconditioning of large matrices, are covered  in Section 8. We
 measure the performance by the number of iteration loops, the mean CPU time, and the error bounds. In our tests we compare performance of our algorithm and
the known methods and show our
 improvement in terms of both computational
time
and accuracy. In Section 9 we present our brief concluding remarks and point out some further research directions.

\section{Our First Fast Algorithm}
It is well known that algorithm (\ref{schulz}) has polylogarithmic complexity
and is numerically stable and even self-correcting if the matrix $A$
is nonsingular, 
but otherwise is mildly unstable \cite{Soderstorm},
\cite{Pan1991}. Moreover it converges quite 
slowly
in the beginning. Namely, its initial convergence is linear, and  many iteration loops are
generally required in order to arrive at the final quadratic convergence \cite[pp. 259-287]{Higham2002}.
A natural remedy is 
provided by  higher order matrix methods
using fewer matrix-by-matrix multiplications, which are the cost dominant operations
in hyperpower iterations (\ref{Hyperpower1}).

Let $A\in\mathbb{C}^{m \times n}$,
 let $T$ and $S$ be subspaces of $\mathbb{C}^{n}$ and $\mathbb{C}^{m}$, respectively, with $dim T=dim S^{\perp}=t\leq r$, assume that $G\in\mathbb{C}^{n \times m}$ satisfies 
$R(G)\subseteq T$ and $N(G)\supseteq S$, and write $X_0 = \alpha G$, 
for a 
nonzero real scalar  $\alpha$ and a matrix $G$, 
both specified in Section 4.
Now define a  hyperpower iteration,
 for
$p=18$ and any $k = 0, 1, 2, \ldots$, by
\begin{equation}\label{hyper18}
X_{k+1}=X_k(I+R_k+R_k^2+\cdots+R_k^{17}), \qquad R_k=I-AX_k.
\end{equation}

The algorithm has the 18-th order of convergence and involves 18 matrix-by-matrix products
per iteration loop, but we are going to use fewer products.

Based on factorization (\ref{hyper18}), we obtain (HM)
\begin{equation}\label{hyper18-2}
X_{k+1}=X_k (R_k+I) \left(R_k^2-R_k+I\right) \left(R_k^2+R_k+I\right) \left(R_k^6-R_k^3+I\right) \left(R_k^6+R_k^3+I\right),
\end{equation}
and consequently
\begin{equation}\label{hyper18-3}
X_{k+1}=X_k (I+R_k) \left(I+R_k^2+R_k^4+R_k^6+R_k^8+R_k^{10}+R_k^{12}+R_k^{14}+R_k^{16}\right).
\end{equation}
Iterations (\ref{hyper18-2}) and (\ref{hyper18-3}) are clearly
superior to the original scheme (\ref{hyper18}),
but we will simplify them further.

Consider the following 
iterations,
\begin{equation}\label{hyper18-33}
\begin{split}
 I+R_k^2+R_k^4+R_k^6+R_k^8+R_k^{10}+R_k^{12}+R_k^{14}+R_k^{16}=&(
I+a_1R_k^2+a_2R_k^4+a_3R_k^6+R_k^8)\\
 &\times(
I+b_1R_k^2+b_2R_k^4+b_3R_k^6+R_k^8)+(\mu R_k^2+\psi R_k^4)
\end{split}
\end{equation}
where  
we write $a_3=b_3$ and
select
 seven nonzero real parameters
 $a_1,a_2,a_3,b_1,b_2,b_3,\mu,\psi$
from the following
 system of seven nonlinear equations:
\begin{equation}
\left\{ \begin{array}{l}
\mu + a_1 + b_1=1,\\[1mm]
a_2 + \psi + a_1 b_1 + b_2=1,\\[1mm]
2 a_3 + a_2 b_1 + a_1 b_2=1,\\[1mm]
2 + a_1 a_3 + a_3 b_1 + a_2 b_2=1,\\[1mm]
a_1 + a_2 a_3 + b_1 + a_3 b_2=1,\\[1mm]
a_2 + a_3^2 + b_2=1,\\[1mm]
2 a_3=1.
\end{array}\right.
\end{equation}
We obtain
\begin{equation}
a_1= \frac{5}{496} \left(31+\sqrt{93}\right),\quad a_2= \frac{1}{8} \left(3+\sqrt{93}\right),\quad a_3= \frac{1}{2},
\end{equation}
\begin{equation}
b_1= \frac{-5}{496} \left(\sqrt{93}-31\right),\quad b_2= \frac{1}{8} \left(3-\sqrt{93}\right),\quad
\mu= \frac{3}{8},\quad \psi= \frac{321}{1984}.
\end{equation}
Factorization (\ref{hyper18-3}) enables us to reduce the number of matrix-by-matrix multiplications to eight, but we are going to simplify this procedure further. We apply a similar strategy
and deduce the following factorization:
\begin{equation}\label{simplify1}
\begin{split}
1+a_1R_k^2+a_2R_k^4+a_3R_k^6+R_k^8=&(1+c_1R_k^2+R_k^4)(1+c_2R_k^2+R_k^4)+(c_3 R_k^2).
\end{split}
\end{equation}
By solving the  nonlinear system of algebraic equations
\begin{equation}
\left\{ \begin{array}{l}
c_1 + c_2 + c_3=a_1,\\[1mm]
2 + c_1 c_2=a_2,\\[1mm]
c_1 + c_2=a_3,
\end{array}\right.
\end{equation}
we obtain
\begin{equation}
c_1= \frac{1}{4} \left(\sqrt{27-2 \sqrt{93}}+1\right),\quad c_2= \frac{1}{4} \left(1-\sqrt{27-2 \sqrt{93}}\right),\quad c_3= \frac{1}{496} \left(5 \sqrt{93}-93\right).
\end{equation}
Furthermore write
\begin{equation}\label{simplify2}
\begin{split}
1+b_1R_k^2+b_2R_k^4+a_3R_k^6+R_k^8=&(1+c_1R_k^2+R_k^4)(1+c_2R_k^2+R_k^4)+(d_1 R_k^2+d_2 R_k^4)
\end{split}
\end{equation}
and by solving the nonlinear system of equations
\begin{equation}
\left\{ \begin{array}{l}
d_1+\frac{1}{2}=b_1,\\[1mm]
\frac{1}{8} \left(8 d_2+\sqrt{93}+3\right)=b_2,
\end{array}\right.
\end{equation}
deduce that
\begin{equation}
d_1=\frac{1}{496} \left(-93-5 \sqrt{93}\right),\quad d_2=-\frac{\sqrt{93}}{4}.
\end{equation}

Summarizing, we arrive at the following iterative method (PM) for computing generalized inverse:
\begin{equation}\label{PM}
\left\{ \begin{array}{l}
R_k= I-AX_k,\quad R_k^2 = R_kR_k,\quad R_k^4 = R_k^2R_k^2,\\[1mm]
M_k = (I + c_1 R_k^2 + R_k^4)(I+ c_2 R_k^2 + R_k^4),\\
T_k = M_k + c_3 R_k^2,\quad S_k = M_k + d_1 R_k^2 + d_2 R_k^4,\\
X_{k+1} = X_k((I + R_k)((T_kS_k) + \mu R_k^2 + \psi R_k^4)).
\end{array}\right.
\end{equation}
The iteration requires only seven matrix-by-matrix multiplications per loop,
and as we show next, the algorithm has convergence rate eighteen.

\section{Convergence and Error Analysis}

In this section we present convergence and error analysis of our algorithm (\ref{PM}).

\begin{thm} \label{Thm1}
Assume that $A\in \mathbb C^{m\times n}_r$ and $G\in \mathbb C^{n\times m}_s$ is a matrix of rank 
$0<s\leq r$
such that $\ra{GA}=\ra{G}$. Then the sequence of matrix approximations $\{X_k\}_{k=0}^{k=\infty}$ defined by the matrix iteration \eqref{PM} converges to $A_{R{(G)},N{(G)}}^{(2)}$ with the eighteenth order of
convergence if the initial value $X_0=\alpha G$ 
satisfies
\begin{equation}\label{iniccondWG}
\|F_0\|=\|AA_{T,S}^{(2)}-AX_0\|<1.
\end{equation}
Here $\|\cdot\|$ denotes the spectral matrix norm.
\end{thm}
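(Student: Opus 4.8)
The plan is to first show that the compact iteration \eqref{PM} generates exactly the same sequence as the plain hyperpower scheme \eqref{hyper18}, and then to analyze \eqref{hyper18} by tracking the single residual matrix $F_k:=AX-AX_k$, where $X:=A_{R(G),N(G)}^{(2)}$; its existence under $\ra{GA}=\ra{G}$ follows from the Lemma of Section 1 together with standard theory (e.g.\ \cite{Wei3}). For the reduction, observe that $M_k$, $T_k$, $S_k$ and the final bracket of \eqref{PM} are all polynomials in the single matrix $R_k$, hence obey exactly the algebraic identities of the corresponding scalar polynomials. The parameters $c_i,d_i,a_i,b_i,\mu,\psi$ were chosen to solve precisely the displayed nonlinear systems, which force $(1+c_1t^2+t^4)(1+c_2t^2+t^4)+c_3t^2=1+a_1t^2+a_2t^4+a_3t^6+t^8$ together with its companion coming from \eqref{simplify2}, and hence, through \eqref{hyper18-33}, force $(T_kS_k)+\mu R_k^2+\psi R_k^4=I+R_k^2+\cdots+R_k^{16}$. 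Thus the update of \eqref{PM} equals $X_k(I+R_k)(I+R_k^2+\cdots+R_k^{16})=X_k\sum_{i=0}^{17}R_k^i$, which is \eqref{hyper18}. This is a mechanical (if lengthy) polynomial verification, after which it suffices to study \eqref{hyper18}.

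Next I would record the structural invariants. From $XAX=X$, $R(X)=R(G)=:T$ and $N(X)=N(G)=:S$ it follows that $AX$ is the idempotent $P_{AT,S}$ and $XA$ is an idempotent with range $T$. Rewriting the update as $X_{k+1}=\big(\sum_{i=0}^{17}(I-X_kA)^i\big)X_k$ and arguing by induction from $X_0=\alpha G$ gives $R(X_k)\subseteq T$ and $N(X_k)\supseteq S$ for all $k$. These two inclusions furnish the cancellation relations on which everything rests: $X_k(I-AX)=0$ (since $R(I-AX)=N(AX)=S\subseteq N(X_k)$), $XAX_k=X_k$ (since $XA$ fixes $T\supseteq R(X_k)$), and $(AX)F_k=F_k(AX)=F_k$ (using $R(F_k)\subseteq AT$ and the relation $X_k(I-AX)=0$, together with $XAX=X$).

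The heart of the proof is a clean recursion for $F_k$. Writing $R_k=(I-AX)+F_k$, where $(I-AX)^2=I-AX$ and the cross terms $(I-AX)F_k$ and $F_k(I-AX)$ both vanish, one obtains $R_k^i=(I-AX)+F_k^i$ for $i\ge1$, hence $\sum_{i=0}^{17}R_k^i=I+17(I-AX)+\sum_{i=1}^{17}F_k^i$. Multiplying on the left by $X_k$ and using $X_k(I-AX)=0$ gives $X_{k+1}=X_k+X_k\sum_{i=1}^{17}F_k^i$, so that $F_{k+1}=F_k-AX_k\sum_{i=1}^{17}F_k^i$; substituting $AX_k=AX-F_k$ and $AX\,F_k^i=F_k^i$ turns $AX_k\sum_{i=1}^{17}F_k^i$ into the telescoping sum $\sum_{i=1}^{17}(F_k^i-F_k^{i+1})=F_k-F_k^{18}$, whence $F_{k+1}=F_k^{18}$. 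Since $\|\cdot\|$ is the submultiplicative spectral norm and $\|F_0\|<1$, this yields $\|F_k\|\le\|F_0\|^{18^k}\to0$; moreover $X-X_k=XAX-XAX_k=XF_k$, so $X_k\to X=A_{R(G),N(G)}^{(2)}$, and from $F_k=A(X-X_k)$ we get $\|X-X_{k+1}\|=\|XF_k^{18}\|\le\|X\|\,\|A\|^{18}\,\|X-X_k\|^{18}$, i.e.\ convergence of order eighteen.

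The main obstacle is the bookkeeping in the last two steps rather than any estimate: one must verify that the range and null-space inclusions are exactly strong enough to annihilate every cross term $(I-AX)F_k$, $F_k(I-AX)$ and $X_k(I-AX)$, and carry out the telescoping without index or sign slips — it is precisely these exact cancellations, not a bound of the form $\|F_{k+1}\|\le(\text{polynomial in }\|F_k\|)$, that produce the identity $F_{k+1}=F_k^{18}$ and thereby pin down the $18$th order of convergence. The polynomial-identity verification of the first step is routine.
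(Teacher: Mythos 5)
Your proof is correct and follows essentially the same route as the paper: both reduce \eqref{PM} to the plain hyperpower iteration \eqref{hyper18} and then establish the residual identity $\mathcal{F}_{k+1}=\mathcal{F}_k^{18}$, from which convergence under $\|\mathcal{F}_0\|<1$ and the eighteenth order follow. Your version is in fact the more careful one --- you explicitly prove the invariants $R(X_k)\subseteq T$ and $N(X_k)\supseteq S$ and use them to kill the cross terms $(I-AX)\mathcal{F}_k$ and $\mathcal{F}_k(I-AX)$, whereas the paper leans on the literally false claim $(I-AA_{T,S}^{(2)})^i=0$ for $i>1$ (the projector is idempotent, not nilpotent; it is precisely the vanishing cross terms that rescue the expansion).
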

{\bf Proof.} Let us first define the residual matrix in
the $k$th iterate of \eqref{PM} by writing
\begin{equation} \label{f100}
\mathcal{F}_{k}=AA_{T,S}^{(2)}-AX_k.
\end{equation}
Equation (\ref{f100}) can be written as
follows:
\begin{equation}\label{f3000G0a}
\aligned
\mathcal{F}_{k+1}&=AA_{T,S}^{(2)}-AX_{k+1}\\
&=AA_{T,S}^{(2)}-I+I-AX_{k+1}\\
&=AA_{T,S}^{(2)}-I+I-A[X_k((I + R_k)((T_kS_k) + \mu R_k^2 + \psi R_k^4))]\\
&=AA_{T,S}^{(2)}-I+I-A\left[X_k (I+R_k)\left(I+R_k^2+R_k^4+R_k^6+R_k^8+R_k^{10}+R_k^{12}+R_k^{14}+R_k^{16}\right)\right].
\endaligned
\end{equation}
Equation \eqref{f3000G0a}
implies the following relationships:
\begin{equation}\label{f3000G001}
\aligned
\mathcal{F}_{k+1}&=AA_{T,S}^{(2)}-I+(I-AX_{k})^{18}\\
&=AA_{T,S}^{(2)}-I+(I-AA_{T,S}^{(2)}+AA_{T,S}^{(2)}-AX_{k})^{18}\\
&=AA_{T,S}^{(2)}-I+\left[(I-AA_{T,S}^{(2)})+\mathcal{F}_k\right]^{18}.
\endaligned
\end{equation}
Therefore
\begin{equation}\label{f3000G0}
\aligned
\mathcal{F}_{k+1}&=AA_{T,S}^{(2)}-I+[I-AA_{T,S}^{(2)}+\mathcal{F}_k^{18}]\\
&=\mathcal{F}_k^{18}.
\endaligned
\end{equation}
Note that $(I-AA_{T,S}^{(2)})^i=0,\ i>1$, and  that $\mathcal{E}_k=A_{R{(G)},N{(G)}}^{(2)}-X_k$ is the error matrix of the approximation of the outer generalized inverse $A_{T,S}^{(2)}$. Consequently
\begin{equation}\label{formu1}
\aligned
A\mathcal{E}_{k+1}&=AA_{R{(G)},N{(G)}}^{(2)}-AX_{k+1}=\mathcal{F}_{k+1}=\mathcal{F}_k^{18}.
\endaligned
\end{equation}
By using equation \eqref{formu1} and some elementary algebraic transformations,
we 
deduce that
\begin{equation}\label{formu2}
\aligned
\|A\mathcal{E}_{k+1}\|&\leq\|\mathcal{F}_k\|^{18}\\
&=\|A\mathcal{E}_{k}\|^{18}\\
&\leq\|A\|^{18}\|\mathcal{E}_{k}\|^{18}.
\endaligned
\end{equation}
By applying inequality (\ref{formu2})
and assuming that the integer $k$
is large enough, we estimate the rate of convergence as follows:
\begin{equation}\label{formula9}
\aligned
\|\mathcal{E}_{k+1}\|&=\|X_{k+1}-A_{R{(G)},N{(G)}}^{(2)}\|\\
&=\left\|A_{R{(G)},N{(G)}}^{(2)}AX_{k+1}-A_{R{(G)},N{(G)}}^{(2)}AA_{R{(G)},N{(G)}}^{(2)}\right\|\\
&=\left\|A_{R{(G)},N{(G)}}^{(2)}\left(AX_{k+1}-AA_{R{(G)},N{(G)}}^{(2)}\right)\right\|\\
&\leq\|A_{R{(G)},N{(G)}}^{(2)}\|\, \|A\mathcal{E}_{k+1}\|\\
&\leq\|A_{R{(G)},N{(G)}}^{(2)}\|\, \|A\|^{18}\, \|\mathcal{E}_k\|^{18}.
\endaligned
\end{equation}
Therefore
\begin{equation}
\{X_k\}_{k=0}^{k=\infty}\to A_{R{(G)},N{(G)}}^{(2)},
\end{equation}
which shows that the convergence rate is eighteen.  \hfill $\Box$

\section{Stopping Criterion and the Choice of an Initial Approximate Inverse}
According to \cite{Stanimirovic2016},
a reliable stopping criterion for a $p$th order matrix scheme can be expressed as 
follows:
\begin{equation}\label{stopreliable}
\frac{\|X_{k+1}-X_k\|_*}{p^k\alpha}<\epsilon,
\end{equation}
where $\epsilon$ is the tolerance and $\alpha$ is the positive real number
involved in the definition of an initial approximate inverse
$X_0=\alpha G$.

We choose an initial approximation $X_0$ satisfying (\ref{iniccondWG})
in order to ensure convergence. It is sufficient to have this matrix in the form $X_0=\alpha G$
such that
\begin{equation}\label{iniccondWG1}
\left\|A A_{T,S}^{(2)}-AX_0\right\|<1.
\end{equation}
By extending the idea of Pan and Schreiber \cite{Pan1991}, however,
we can choose a more efficient initial value in the form $X_0=\alpha G$, where 
\begin{equation}
\alpha=\frac{2}{\sigma_{1}^2+\sigma_{r}^2},
\end{equation} 
and $\sigma_1\geq \sigma _2\geq \ldots \geq \sigma _r > 0$ are the nonzero eigenvalues of $GA$.

Some initial approximations for  matrices of various types are provided below. For a symmetric positive definite (SPD) matrix $A$, one can apply the Householder-John theorem \cite{Li2011} in order
to obtain  the initial value $X_0=P^{-1},$  where  $P$ can be any matrix such that $P+P^{T}-A$ is SPD. A sub-optimal way of producing $X_0$ for the rectangular matrix $A$ had been given by 
$X_0=\frac{A^*}{\|A\|_{1}\|A\|_{\infty}}$. Also, for finding the Drazin inverse, one may choose  the initial approximation $X_0=\frac{1}{\tr{A^{l+1}}}A^l$ where $\tr{\cdot}$ stands for the trace of a square matrix and $l$ for its {\em index}, $\ind{A}$, that is, the smallest nonnegative integer $l$ such that $rank(A^{l+1})$ $=rank(A^{l})$. The paper  \cite{Gonzalez} proposes some further recipes for the construction of initial inverses specially in the case of square matrices.

\section{Computational Efficiency}
The customary concept of the efficiency index  of  iterative methods can be traced back to 1959 (see \cite{Ehrmann}). Traub in 1964 \cite[Appendix C]{Traub} used this  index in his study of  fixed-point iterations as follows,
\begin{equation}
EI=p^{\frac{1}{c}}.
\end{equation}
Here $c$ stands for the number of dominant cost operations per an iteration loop (in our case they are matrix-by-matrix multiplications), and $p$ denotes the local convergence rate.

Based on the work \cite{SoleymaniLMA}, we estimate that our method (\ref{PM}) converges with the errors within the machine precision in  
approximately
\begin{equation}
s\approx 2log_p\kappa_2(A)
\end{equation}
iteration loops where 
$\kappa_2(A)$ denotes the  condition number of the matrix $A$ in  spectral norm. Recall  that our iteration (\ref{PM}) reaches eighteenth-order convergence by using only seven matrix-by-matrix multiplications. Here are the efficiency indices of various algorithms for generalized inverse:
\begin{equation}\label{efficiencies}
EI_{(\ref{schulz})}=2^{\frac{1}{2}}\approx1.41421,\
EI_{(\ref{chebyshev})}=3^{\frac{1}{3}}\approx1.44225,\
EI_{(\ref{SoleymaniN})}=7^{\frac{1}{5}}\approx1.47577,
\end{equation}
and
\begin{equation}
EI_{(\ref{hyper18-2})}=18^{\frac{1}{9}}\approx1.37872,\
EI_{(\ref{hyper18})}=18^{\frac{1}{18}}\approx1.17419,\
EI_{(\ref{PM})}=18^{\frac{1}{7}}\approx\textbf{1.51121}.
\end{equation}

The latter  efficiency index is  record high for iterative algorithms for generalized inverse.

\section{Numerical Stability Estimates}
In this section we study numerical stability of our iteration (\ref{PM}) in a neighborhood of the solution of the equation 
\begin{equation}
XAX-X=0. 
\end{equation}
We are going to estimate the rounding errors based on the first order error analysis  \cite{Du}.
We recall that iteration (\ref{Hyperpower1})
is self-correcting for computing the 
inverse of a nonsingular matrix, but not so for computing generalized inverses \cite{Soderstorm},
\cite{Pan1991}. 
For the latter task our iteration  (\ref{Hyperpower1}) is  numerically unstable, 
although the instability is rather mild, as we prove next.
In Section 8
 we complement 
our formal study 
 by empirical results.
In our next theorem we proceed under 
the same assumptions as in Theorem \ref{Thm1},
allowing any initial approximation.
In Theorem 6.2 we slightly improve the resulting  estimate 
 assuming the   standard choice of
an initial approximation in the form
$X_0=\alpha A^*$ or more generally $X_0=A^*P(A^*A)$ for 
a constant $\alpha$ and a polynomial  $P(x)$.

\begin{thm}\label{them-stability}
Consider  
 the sequence $\{X_k\}_{k=0}^{k=\infty}$ generated by \eqref{PM}
under the same assumptions as in Theorem \ref{Thm1}.
Write
\begin{equation}\label{stability0}
\widetilde{X}_k=X_k+\Delta_k
\end{equation}
for all $k$ assuming that $\Delta_k$ is a numerical perturbation of the $k$th exact iterate $X_k$
 and has a sufficiently small norm, so that we can ignore quadratic and higher order terms 
in $\mathcal{O}(\Delta_k^2)$.

Then 
\begin{equation}\label{stability2011}
\|\Delta_{k+1}\|\leq \mathcal{C }\|\Delta_0\|
\end{equation}

where
\begin{equation}\label{stability20}
\mathcal{C} = 18^{k+1}\prod_{j=0}^{k}[\max\{1, \|R_j\|^{17}\}(1 + 17\|A\|\|X_j\|)].
\end{equation}

\end{thm}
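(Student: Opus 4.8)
The plan is to propagate a first--order perturbation through the \emph{exact} iteration map. Since the factored scheme \eqref{PM} is algebraically identical to the closed form \eqref{hyper18}, it suffices to analyze the map $\Phi(X)=X\sum_{i=0}^{17}R(X)^i$ with $R(X)=I-AX$, so that $X_{k+1}=\Phi(X_k)$ and $\widetilde X_{k+1}=\Phi(\widetilde X_k)$. First I would put $\widetilde R_k=I-A\widetilde X_k=R_k-A\Delta_k$ and expand each power to first order using the noncommutative product rule: $\widetilde R_k^{\,i}=R_k^i-\sum_{j=0}^{i-1}R_k^{\,j}(A\Delta_k)R_k^{\,i-1-j}+\mathcal O(\Delta_k^2)$. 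Substituting into $\widetilde X_{k+1}=(X_k+\Delta_k)\sum_{i=0}^{17}\widetilde R_k^{\,i}$, subtracting $X_{k+1}$, and dropping the $\mathcal O(\Delta_k^2)$ terms as the statement permits, one gets
\[
\Delta_{k+1}=\Delta_k\sum_{i=0}^{17}R_k^i-X_k\sum_{i=1}^{17}\sum_{j=0}^{i-1}R_k^{\,j}(A\Delta_k)R_k^{\,i-1-j}+\mathcal O(\Delta_k^2).
\]

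Next I would bound the two summands in the spectral norm. For the first, $\bigl\|\sum_{i=0}^{17}R_k^i\bigr\|\le\sum_{i=0}^{17}\|R_k\|^i\le 18\max\{1,\|R_k\|^{17}\}$, since the sum of the $18$ nonnegative powers is at most $18$ when $\|R_k\|\le1$ and at most $18\|R_k\|^{17}$ when $\|R_k\|>1$. For the second, each of the $1+2+\cdots+17=153$ terms satisfies $\|R_k^{\,j}(A\Delta_k)R_k^{\,i-1-j}\|\le\|R_k\|^{\,i-1}\|A\|\|\Delta_k\|\le\max\{1,\|R_k\|^{16}\}\|A\|\|\Delta_k\|\le\max\{1,\|R_k\|^{17}\}\|A\|\|\Delta_k\|$, hence the whole double sum is bounded by $153\max\{1,\|R_k\|^{17}\}\|A\|\|\Delta_k\|\le 18\cdot 17\,\max\{1,\|R_k\|^{17}\}\|A\|\|\Delta_k\|$. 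Combining these via submultiplicativity and the triangle inequality gives the one--step estimate
\[
\|\Delta_{k+1}\|\le 18\,\max\{1,\|R_k\|^{17}\}\,\bigl(1+17\|A\|\|X_k\|\bigr)\,\|\Delta_k\|+\mathcal O(\Delta_k^2).
\]

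Finally I would iterate this inequality from index $0$ to $k$: each step multiplies the bound on the perturbation norm by the factor $18\max\{1,\|R_j\|^{17}\}(1+17\|A\|\|X_j\|)$, and the product of these $k+1$ factors is exactly $\mathcal C=18^{k+1}\prod_{j=0}^{k}[\max\{1,\|R_j\|^{17}\}(1+17\|A\|\|X_j\|)]$, which yields $\|\Delta_{k+1}\|\le\mathcal C\|\Delta_0\|$. I expect the only real work to be the bookkeeping in the first--order expansion of $\widetilde R_k^{\,i}$ (getting the noncommutative product rule and its index ranges exactly right) together with the mild combinatorial estimate $\sum_{i=1}^{17}i=153\le 18\cdot17$ needed to absorb the double sum into the stated constant; the norm inequalities themselves are routine.
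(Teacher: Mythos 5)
Your proposal is correct and follows essentially the same route as the paper: both split $\Delta_{k+1}$ into $\Delta_k\sum_j \widetilde R_k^{\,j} + X_k\sum_j(\widetilde R_k^{\,j}-R_k^{\,j})$, bound the two pieces by $18\max\{1,\|R_k\|^{17}\}$ and $17\cdot 18\max\{1,\|R_k\|^{17}\}\|A\|\|X_k\|$ respectively (to first order in $\Delta_k$), and telescope the one-step factor into the stated product. The only cosmetic difference is that you expand $\widetilde R_k^{\,i}-R_k^{\,i}$ via the explicit noncommutative product rule, whereas the paper bounds its norm by the scalar binomial difference $(\|R_k\|+\|A\Delta_k\|)^j-\|R_k\|^j$; both yield the same leading coefficient $j\|R_k\|^{j-1}$.
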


{\bf Proof.}
Write  $\widetilde{R}_k=I-A\widetilde{X}_k$.
Deduce that for each $j$,
$j=1,...,17$, 
\begin{equation}
\aligned
\|\widetilde{R}_k^j\|
&= \|(R_k - A\Delta_k)^j\|\\
&\leq \|R_k - A\Delta_k\|^j\\
&\leq (\|R_k\| + \|A\Delta_k\|)^j\\
&= C_0^j,
\endaligned
\end{equation}
where $C_0 = \|R_k\| + \|A\Delta_k\| = \|R_k\| + \mathcal{O}(\|\Delta_k\|)$. 
Furthermore

\begin{equation}
\aligned
\|\widetilde{R}_k^j - R_k^j\|
&= \|(R_k - A\Delta_k)^j - R_k^j\|\\
&\leq (\|R_k\| + \|A\Delta_k\|)^j - \|R_k\|^j\\
&= \|A\Delta_k\|(\sum_{i=0}^{j-1}{j \choose j-1-i}\|A\Delta_k\|^i\|R_k\|^{j-1-i})\\
&= D_j\|A\Delta_k\|
\endaligned
\end{equation}
for $D_j = \sum_{i=0}^{j-1}{j \choose j-1-i}\|A\Delta_k\|^i\|R_k\|^{j-1-i} = j\|R_k\|^{j-1} 
+ \mathcal{O}(\|\Delta_k\|)$.


Then deduce that
\begin{equation}
\aligned
\Delta_{k+1} &= \widetilde{X}_{k+1} - \widetilde{X}_k \\
&= \widetilde{X}_k(I + \widetilde{R}_k + \widetilde{R}_k^2 + \cdots + \widetilde{R}_k^{17}) - X_k(I + R_k + R_k^2 + \cdots + R_k^{17})\\
&= (X_k + \Delta_k)(I + (R_k-A\Delta_k) + (R_k-A\Delta_k)^2 + \cdots + (R_k-A\Delta_k)^{17}) - X_k(I + R_k + R_k^2 + \cdots + R_k^{17})\\
&= \Delta_k\sum_{j=0}^{17}[\widetilde{R}_k^j] + X_k\sum_{i=0}^{17}[\widetilde{R}_k^j - R_k^j].\\
\endaligned
\end{equation}

Therefore 
\begin{equation}
\aligned
\|\Delta_{k+1}\| &= \|\Delta_k\sum_{j=0}^{17}[\widetilde{R}_k^j] + X_k\sum_{i=0}^{17}[\widetilde{R}_k^j - R_k^j]\|\\
&\leq \|\Delta_k\|\sum_{j=0}^{17}\|\widetilde{R}_k^j\| + \|X_k\|\sum_{i=0}^{17}\|\widetilde{R}_k^j - R_k^j\|\\
&= \|\Delta_k\|\sum_{j=0}^{17}C_0^j + \|A\Delta_k\|\|X_k\|\sum_{i=0}^{17}D_j\\
&\leq \|\Delta_k\|\sum_{j=0}^{17}[C_0^j + \|A\|\|X_k\|D_j]\\
&< \|\Delta_k\|[18\max\{1, \|R_k\|^{17}\}(1 + 17\|A\|\|X_k\|)] + \mathcal{O}(\|\Delta_k\|).
\endaligned
\end{equation}
 
This yields the claimed estimates (\ref{stability2011}) and (\ref{stability20})
for numerical perturbation at iteration loop $k+1$. 
 \hfill $\Box$\\

The following result a little refines the estimate of Theorem
\ref{them-stability}
 under the standard choices of $X_0$.

\begin{thm}\label{singular-stability}
Consider the same assumptions as in Theorem \ref{Thm1} 
and define singular value decompositions (SVDs) 
\begin{equation}
A = U \left [
\begin{array}{cc}
\Sigma & 0\\
0	& 0
\end{array}
\right ] V^T
\end{equation}
and
\begin{equation}
A^\dagger = V \left [
\begin{array}{cc}
\Sigma^\dagger & 0\\
0	& 0
\end{array}
\right ] U^T
\end{equation}
for a matrix $A$ and its Moore-Penrose pseudo inverse $A^\dagger$.
Moreover, 
let 
$X_0=\alpha A^*$ or more generally let $X_0=A^*P(A^*A)$ for 
a constant $\alpha$ and a polynomial  $P(x)$.
Then write
\begin{equation}
X_k = A^\dagger + E_k = V \left [
\begin{array}{cc}
\Sigma^\dagger + E_{11} & E_{12}\\
E_{21}	& E_{22}
\end{array}
\right ] U^T,
\end{equation}
where $E_k$ is the error of approximation after $k$-th iteration loop.
 Then 
\begin{equation}
\|E_k\| \leq 18^k\|E_0\| + o(\|E_0\|).
\end{equation}
\end{thm}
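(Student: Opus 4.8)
The plan is to leverage the special structure of the standard initial guess $X_0 = A^* P(A^*A)$ and track the iteration through the singular value decomposition. First I would observe that if $X_0 = A^* P(A^*A)$, then in the SVD coordinates $A = U\begin{bmatrix}\Sigma & 0\\ 0 & 0\end{bmatrix}V^T$, the matrix $X_0$ is block-diagonal of the form $V\begin{bmatrix}\Sigma P(\Sigma^2) & 0 \\ 0 & 0\end{bmatrix}U^T$, so the off-diagonal and $(2,2)$ blocks of $E_0$ vanish: $E_{12}=E_{21}=E_{22}=0$ at $k=0$. The key structural fact is that the hyperpower iteration $X_{k+1}=X_k\sum_{j=0}^{17}R_k^j$ with $R_k = I-AX_k$ \emph{preserves} this block-diagonal structure: writing $X_k = V\,\mathrm{diag}(Y_k,0)\,U^T$ gives $AX_k = U\,\mathrm{diag}(\Sigma Y_k,0)\,U^T$, hence $R_k = U\,\mathrm{diag}(I-\Sigma Y_k,\, I)\,U^T$, and therefore $X_{k+1}$ stays of the form $V\,\mathrm{diag}(Y_{k+1},0)\,U^T$. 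So for the exact iterates all the error sits in the $(1,1)$ block on the nonsingular part $\Sigma$, where $A$ restricted to that block is genuinely invertible.

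Next I would invoke the self-correcting property on the nonsingular block. On the subspace where $A$ acts as the invertible matrix $\Sigma$, the Newton--Schulz-type hyperpower iteration is self-correcting: the first-order propagation of a perturbation $\Delta_k$ in $X_k$ contracts (or at worst is bounded), rather than amplifying by the factor $18^{k+1}\prod_j[\cdots]$ appearing in Theorem~\ref{them-stability}. Concretely, one runs the same first-order error analysis as in the proof of Theorem~\ref{them-stability} but now with $A\to \Sigma$ invertible and $X_k\to \Sigma^{-1}$ near convergence, so that $R_k = I-\Sigma Y_k$ is small; the dominant term $\|X_k\|\sum_j D_j\|A\Delta_k\|$ becomes a genuinely higher-order quantity near the fixed point, and one is left with $\|E_{k+1}\| \le 18^k\|E_0\| + o(\|E_0\|)$ — in fact the crude bound is just the $\sum_{j=0}^{17}\|R_k^j\|\le 18$ factor compounded over $k$ steps, with the residual terms collected into $o(\|E_0\|)$ because $E_0$ small forces all the $R_j$ small on the relevant block by the convergence of Theorem~\ref{Thm1}. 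I would write $E_{k+1}$ in the SVD blocks, note that the $(1,2),(2,1),(2,2)$ blocks of the perturbed iterate are $O(\|E_0\|^2)$ (they start at zero and are only excited through the perturbation, nonlinearly), and bound the $(1,1)$ block by the self-correcting estimate.

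The main obstacle I anticipate is making rigorous the claim that the off-diagonal blocks $E_{12}, E_{21}, E_{22}$ contribute only at order $o(\|E_0\|)$: a numerical perturbation $\Delta_0$ of $X_0$ need not respect the block-diagonal structure, so $E_0$ itself can have nonzero off-diagonal blocks of size $O(\|E_0\|)$, and one must check how these propagate. The resolution is that the linearized iteration map decouples the blocks at first order on the nonsingular part, so the off-diagonal blocks evolve by a bounded linear recursion as well; but keeping track of cross terms between $\Sigma^\dagger$ and the off-diagonal perturbations through the degree-$17$ polynomial in $R_k$ requires care. A secondary subtlety is justifying that ``$k$ large enough'' / near-convergence regime so that $\|R_k\|<1$ on the $(1,1)$ block, which follows from Theorem~\ref{Thm1} once $\|E_0\|$ (equivalently $\|F_0\|$) is below the convergence threshold. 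Once those points are dispatched, the final bound $\|E_k\|\le 18^k\|E_0\| + o(\|E_0\|)$ drops out by the same telescoping argument as in Theorem~\ref{them-stability}, with the troublesome $\prod_j(1+17\|A\|\|X_j\|)$ factor now absent because the self-correcting mechanism on the invertible block kills the term that produced it.
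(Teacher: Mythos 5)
Your skeleton is the paper's: pass to SVD coordinates, use self-correction of the hyperpower map on the invertible $(1,1)$ block, and track the remaining blocks separately. But the step that actually produces the bound is misidentified, and one of your intermediate claims is false. You assert that the $(1,2)$, $(2,1)$, $(2,2)$ blocks of the iterate are $O(\|E_0\|^2)$ (``only excited nonlinearly''), and where you concede they may be first order you say they ``evolve by a bounded linear recursion''; meanwhile you attribute the $18^k$ to compounding the crude estimate $\sum_{j=0}^{17}\|R_k^j\|\le 18$ near the fixed point. The actual first-order block computation, which is the entire content of the paper's proof, gives
\[
X_kR_k = V\begin{bmatrix}-E_{11}&0\\0&E_{22}\end{bmatrix}U^T,\qquad
X_kR_k^j = V\begin{bmatrix}0&0\\0&E_{22}\end{bmatrix}U^T\quad (j\ge 2),
\]
so that
\[
X_{k+1}=X_k\sum_{j=0}^{17}R_k^j=V\begin{bmatrix}\Sigma^\dagger&E_{12}\\E_{21}&18\,E_{22}\end{bmatrix}U^T+O(\|E_k\|^2).
\]
Thus the $(1,1)$ error is annihilated to first order (that is the self-correction), $E_{12}$ and $E_{21}$ are carried along unchanged, and the $(2,2)$ block --- on which $R_k$ acts as the identity, so that $\sum_j R_k^j=18I$ there --- is multiplied by exactly $18$ per loop. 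That block is the sole source of the $18^k$ in the statement, and it is first order in the perturbation, not second order: a rounding error deposits an $O(\|E_0\|)$ component into $E_{22}$, which then grows geometrically with ratio $18$. Without this computation your argument does not close: the invertible part contributes no growth at all (so compounding a factor $18$ there proves nothing), and a pure norm bound on $X_k\sum_jR_k^j$ cannot control $\|X_{k+1}-A^\dagger\|$ by $\|X_k-A^\dagger\|$, since $\|R_k\|\ge1$ whenever $A$ is singular.
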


\begin{proof}
Throughout the proof  drop all terms of second order in $E_k$. Let $R_k = I - AX_k$
and readily deduce that
\begin{equation}
X_kR_k = V\left [
\begin{array}{cc}
-E_{11} & 0 \\
0	& E_{22}
\end{array}
\right ]U^T
\end{equation}
and that 
for $j \geq 2$ ,
\begin{equation}
X_kR_k^j = V\left [
\begin{array}{cc}
0 & 0 \\
0	& E_{22}
\end{array}
\right ]U^T.
\end{equation}
Thus 
\begin{equation}
\aligned
X_{k+1} 
&= X_k(I + R_k + \cdots + R_k^{17})\\
&= V\left [
\begin{array}{cc}
\Sigma^\dagger & E_{12}\\
E_{21} & 18E_{22}
\end{array}
\right ]U^T\\
&\leq A^\dagger + 18\|\Delta_k\|
\endaligned
\end{equation}
and therefore
\begin{equation}
\Delta_k = X_k - A^\dagger \leq 18\|\Delta_{k-1}\| \leq\cdots\leq 18^k\|\Delta_0\|.
\end{equation}
\end{proof}

Two remarks are in order.

\begin{enumerate}
\item
The iteration is not self-correcting,
and so proceeding beyond convergence may seriously increase error and
cause divergence.
\item
Our estimates above do not cover the influence of the rounding errors on
the convergence \cite{SoleymaniAML}. 
The errors may imply slower convergence or even failure of the method,
but this problem is alleviated in the iteration of the next section.
\end{enumerate}

\section{The Most Efficient  Numerically Stable Iteration}
In this section we modify iteration (\ref{Hyperpower1}) by
adding an extra matrix multiplication per iteration loop and then 
prove numerical stability of the modified iteration,
which achieves the 18th order of convergence by 
performing eight
matrix multiplications per iteration loop. Its efficiency index is
$18^{1/8}> 1.435$; this is substantially higher than the previous
record high index among numerically stable iterations
for this task,  equal to  $2^{1/3}<2.26$ (see (7.4) in \cite{Pan1991}).

\begin{thm}\label{Alternative-iteration}
Consider the same assumption as in Theorem \ref{singular-stability} and modify \ref{PM}
 as follows:
\begin{equation}\label{PM-modified}
\left\{ \begin{array}{l}
R_k= I-AX_k,\quad R_k^2 = R_kR_k,\quad R_k^4 = R_k^2R_k^2,\\[1mm]
M_k = (I + c_1 R_k^2 + R_k^4)(I+ c_2 R_k^2 + R_k^4),\\
T_k = M_k + c_3 R_k^2,\quad S_k = M_k + d_1 R_k^2 + d_2 R_k^4,\\
X_{k+1/2} = X_k((I + R_k)((T_kS_k) + \mu R_k^2 + \psi R_k^4))\\
X_{k+1} = X_{k+1/2}AX_{k+1/2}.
\end{array}\right.
\end{equation}
Observe that the modified iteration is numerically stable.
\end{thm}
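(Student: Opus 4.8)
The plan is to show that the extra half-step $X_{k+1}=X_{k+1/2}AX_{k+1/2}$ restores the self-correcting property that was lost in the bare hyperpower iteration, and that this is exactly what forces the perturbation not to be amplified by the $18^{k+1}$-type factor appearing in Theorem \ref{them-stability}. First I would set up the SVD coordinates of Theorem \ref{singular-stability}: write $X_{k+1/2}=A^{\dagger}+E$ in the block form
\begin{equation*}
X_{k+1/2}=V\begin{bmatrix}\Sigma^{\dagger}+E_{11}&E_{12}\\E_{21}&E_{22}\end{bmatrix}U^{T},
\end{equation*}
drop all terms quadratic in $E$, and compute $X_{k+1/2}AX_{k+1/2}$ directly. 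Since $A=U\begin{bmatrix}\Sigma&0\\0&0\end{bmatrix}V^{T}$, the middle factor annihilates the second block column of the left factor and the second block row of the right factor, so $X_{k+1/2}AX_{k+1/2}=V\begin{bmatrix}\Sigma^{\dagger}+E_{11}&0\\E_{21}&0\end{bmatrix}U^{T}$ to first order — the $E_{22}$ block, which is the piece that was being multiplied by $18$ in the proof of Theorem \ref{singular-stability}, is killed entirely, and the surviving blocks are not inflated. Hence the final-step error satisfies $\|X_{k+1}-A^{\dagger}\|\le\|E\|+o(\|E\|)$, i.e. the projection step is a contraction (or at worst an isometry) on the error at first order, which is precisely the statement that the modified iteration is numerically stable.

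Next I would combine this with the analysis already done for (\ref{PM}). The half-step $X_{k+1/2}=X_k((I+R_k)((T_kS_k)+\mu R_k^2+\psi R_k^4))$ is literally the iteration studied in Theorem \ref{them-stability}, so a single application of that theorem (with $k$ replaced by a one-step index) gives $\|\Delta_{k+1/2}\|\le 18\max\{1,\|R_k\|^{17}\}(1+17\|A\|\|X_k\|)\|\Delta_k\|+\mathcal O(\|\Delta_k\|)$. Feeding $\Delta_{k+1/2}$ into the projection-step estimate above yields $\|\Delta_{k+1}\|\le\|\Delta_{k+1/2}\|+o(\|\Delta_{k+1/2}\|)$, so no new amplification is introduced by the cleanup step; what matters for stability is that, once the iterates are near the solution, $R_k\to 0$ and $\|X_k\|\to\|A_{T,S}^{(2)}\|$ are bounded, so the per-step factor is bounded and, more importantly, the projection step removes the error component that the hyperpower recursion would otherwise grow. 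I would phrase the conclusion as: in a neighborhood of the fixed point of $XAX-X=0$ the map $X_k\mapsto X_{k+1}$ defined by (\ref{PM-modified}) has, to first order in the perturbation, a Jacobian whose restriction to the relevant error subspace has norm $\le 1$, which is the working definition of numerical stability used throughout the paper.

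The main obstacle, and the place I would spend the most care, is justifying that $X_{k+1/2}AX_{k+1/2}$ really does zero out the unstable block to \emph{first} order rather than just to zeroth order — one must check that the cross terms $E_{12}\,(\text{something})\,E_{21}$ and $E_{22}\,(\text{something})\,E_{22}$ are genuinely second order and that the initial-value hypothesis ($X_0=\alpha A^{*}$ or $X_0=A^{*}P(A^{*}A)$, as in Theorem \ref{singular-stability}) guarantees that $X_{k+1/2}$ stays in the range/null-space configuration that makes the block structure valid at every step. This last point is where the choice of $X_0$ is used: it ensures $R(X_k)\subseteq R(A^{*})$ and $N(X_k)\supseteq N(A^{*})$ are preserved, so the off-diagonal blocks are the only freedom and the computation above is legitimate. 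A secondary, purely bookkeeping obstacle is tracking the accumulated $\mathcal O(\Delta^2)$ terms across the two sub-steps so that the final first-order bound is clean; I would handle this exactly as in the proof of Theorem \ref{them-stability}, discarding quadratic terms at each stage and noting that composition of two first-order-stable maps is first-order stable.
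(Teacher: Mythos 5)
Your proposal is correct and takes essentially the same route as the paper's proof: work in the SVD block coordinates of Theorem \ref{singular-stability}, note that the hyperpower half-step produces the error blocks $(0,\,E_{12},\,E_{21},\,18E_{22})$, and verify that the extra step $X_{k+1/2}AX_{k+1/2}$ annihilates the amplified $E_{22}$ block to first order, so no block of the error grows from one loop to the next. Two small points: for a general $X=A^{\dagger}+E$ the first-order block form of $XAX$ is $\Sigma^{\dagger}+2E_{11}$, $E_{12}$, $E_{21}$, $0$, so your $(1,2)$ block should be $E_{12}$ rather than $0$ (harmless, since $E_{12}$ is never amplified) and the $(1,1)$ perturbation doubles (also harmless here, because $E_{11}=0$ after the half-step); and the purely norm-level composition in your second paragraph would by itself only reproduce the factor-$18$ per-loop bound, so the argument must rest, as your first paragraph correctly does, on the block-level cancellation of $E_{22}$.
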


\begin{proof}
 Assume dealing with the modified procedure and  readily verify that
\begin{equation}
\aligned
X_{k+1/2} 
&= V\left [
\begin{array}{cc}
\Sigma^\dagger & E_{12}\\
E_{21} & 18E_{22}
\end{array}
\right ]U^T
\endaligned
\end{equation}
and 
\begin{equation}
X_{k+1} = X_{k+1/2}AX_{k+1/2} = \left [
\begin{array}{cc}
\Sigma^\dagger & E_{12}\\
E_{21} & 0.
\end{array}
\right ].
\end{equation}
Therefore
\begin{equation}
\Delta_k = X_k - A^\dagger \leq \|\Delta_{k-1}\| \leq\cdots\leq \|\Delta_0\|.
\end{equation}
\end{proof}
One can follow the semi-heuristic recipe of \cite{Pan1991}
by switching from
 iteration  (\ref{Hyperpower1}) 
to iteration (\ref{PM-modified}) 
as soon as all significant singular values
have been suppressed.

\section{Numerical Experiments}
In this section, we present the results of our numerical experiments
for algorithm (\ref{PM}). We applied Mathematica 10.0  \cite[pp. 203-224]{Trott} and carried out our demonstrations with machine precision (except for the first test)
on a computer with the following specifications: Windows 7 Ultimate, Service Pack 1, Intel(R) Core(TM) i5-2430M CPU \@ 2.40GHz, and 8.00 GB of RAM.

For the sake of comparisons, we applied the methods SM, CM, FM, HM and PM, setting the maximum number of iteration loops to 100.
We calculated running time by applying the command $\mathtt{AbsoluteTiming[]}$, which reported the elapsed computational time (in seconds).

We computed the order of convergence in our first  experiment by using
the following expression \cite{SoleymaniLMA},
\begin{equation}\label{COC}
\rho=\frac{\ln\left((\|X_{k+1}-X_{k}\|)(\|X_{k}-X_{k-1}\|)^{-1}\right)}
{\ln\left((\|X_{k}-X_{k-1}\|)(\|X_{k-1}-X_{k-2}\|)\right)^{-1}}.
\end{equation}
Here $\|\cdot\|$ denotes the infinity norm $\|\cdot\|_{\infty}$.

\begin{table}[htb]
\begin{center}
\caption{The results of experiments for Example \ref{ex1}.}
\begin{tabular}{|c|ccccc|}\hline Methods &&SM&CM&FM&PM\\  \hline\hline
$\rho$&&2.00&3.00&7.00&18.00\\
IT&&17&11&7&5 \\
$R_{k+1}$&&$3.712\times10^{-66}$&$1.833\times10^{-59}$&$6.3\times10^{-120}$&$7.474\times10^{-107}$\\\hline
\end{tabular}
\end{center}
\end{table}

\begin{exm}\label{ex1}
In our first series of experiments, we compared various methods for finding the Drazin inverse of the following matrix,
{\small
\begin{equation}\label{f101}
A=\left[
\begin{array}{cccccccccccc}
 2 & 4/10 & 0 & 0 & 0 & 0 & 0 & 0 & 0 & 0 & 0 & 0 \\
 -2 & 4/10 & 0 & 0 & 0 & 0 & 0 & 0 & 0 & 0 & 0 & 0 \\
 -1 & -1 & 1 & -1 & 0 & 0 & 0 & 0 & -1 & 0 & 0 & 0 \\
 -1 & -1 & -1 & 1 & 0 & 0 & 0 & 0 & 0 & 0 & 0 & 0 \\
 0 & 0 & 0 & 0 & 1 & 1 & -1 & -1 & 0 & 0 & -1 & 0 \\
 0 & 0 & 0 & 0 & 1 & 1 & -1 & -1 & 0 & 0 & 0 & 0 \\
 0 & 0 & 0 & -1 & -2 & 4/10 & 0 & 0 & 0 & 0 & 0 & 0 \\
 0 & 0 & 0 & 0 & 2 & 4/10 & 0 & 0 & 0 & 0 & 0 & 0 \\
 0 & -1 & 0 & 0 & 0 & 0 & 0 & 0 & 1 & -1 & -1 & -1 \\
 0 & 0 & 0 & 0 & 0 & 0 & 0 & 0 & -1 & 1 & -1 & -1 \\
 0 & 0 & 0 & 0 & 0 & 0 & 0 & 0 & 0 & 0 & 4/10 & -2 \\
 0 & 0 & 0 & 0 & 0 & 0 & 0 & 0 & 0 & 0 & 4/10 & 2
\end{array}
\right],
\end{equation}}
\noindent where $l=\ind{A}=3$, and we used 150 fixed floating point  digits. The results for this example are given in Table 1, where IT stands for the number of iteration loops
 and $R_{k+1}=\|X_{k+1}-X_k\|_{\infty}$ with the $R_{k+1}\leq \epsilon=10^{-50}$ and $X_0=\frac{1}{\tr{A^{l+1}}}A^l$.
\end{exm}

\begin{exm}\label{ex2}
In this series of experiments, we  compared computational time of various algorithms for computing the Moore-Penrose inverse of 10 rectangular \emph{ill-conditioned Hilbert matrices}
\begin{equation}
H_{m\times n}=\left[\frac{1}{i+j-1}\right]_{m\times n}.
\end{equation}
We used stopping criterion (\ref{stopreliable}) with the Frobenius norm and with the initial approximation $X_0=\frac{2}{\sigma_{1}^2+\sigma_{r}^2}H^T$. The results are displayed in Tables 2-4.
\end{exm}

\begin{table}[htb]
\begin{center}
\begin{tabular}{|c| c c c c|}\hline
 \text{Matrix No.} & \text{SM} & \text{CM} & \text{HM} & \text{PM} \\\hline\hline
 $H_{100\times90}$ & 0.049003 & 0.047003 & 0.048003 & 0.047003 \\
 $H_{200\times190}$ & 0.229013 & 0.220013 & 0.224013 & 0.211012 \\
 $H_{300\times290}$ & 0.435025 & 0.380022 & 0.414024 & 0.388023 \\
 $H_{400\times390}$ & 0.976056 & 0.908052 & 0.920053 & 0.941054 \\
 $H_{500\times490}$ & 1.647094 & 1.531088 & 1.652095 & 1.611092 \\
 $H_{600\times590}$ & 2.595148 & 2.464141 & 2.628150 & 2.505143 \\
 $H_{700\times690}$ & 3.842220 & 3.619207 & 3.956226 & 3.645208 \\
 $H_{800\times790}$ & 5.509315 & 5.131293 & 5.563318 & 5.113292 \\
 $H_{900\times890}$ & 7.238414 & 6.880394 & 7.943454 & 6.872393 \\
 $H_{1000\times990}$ & 9.441540 & 9.019516 & 10.208584 & 9.012523 \\\hline
\end{tabular}
\caption{The elapsed time for Example \ref{ex2} by using $\epsilon=10^{-5}$.}
\end{center}
\end{table}

\begin{table}[htb]
\begin{center}
\begin{tabular}{|c| c c c c|}\hline
 \text{Matrix No.} & \text{SM} & \text{CM} & \text{HM} & \text{PM} \\\hline\hline
 $H_{100\times90}$ & 0.047003 & 0.038002 & 0.047003 & 0.045003 \\
 $H_{200\times190}$ & 0.218012 & 0.194011 & 0.192011 & 0.190011 \\
 $H_{300\times290}$ & 0.387022 & 0.349020 & 0.343020 & 0.337019 \\
 $H_{400\times390}$ & 0.833048 & 0.740042 & 0.809046 & 0.794045 \\
 $H_{500\times490}$ & 1.407080 & 1.373079 & 1.436082 & 1.318075 \\
 $H_{600\times590}$ & 2.273130 & 2.162124 & 2.285131 & 2.067118 \\
 $H_{700\times690}$ & 3.287188 & 3.166181 & 3.351192 & 3.057175 \\
 $H_{800\times790}$ & 4.617264 & 4.405252 & 4.745271 & 4.269244 \\
 $H_{900\times890}$ & 6.221356 & 5.952341 & 6.403366 & 5.784331 \\
 $H_{1000\times990}$ & 8.124465 & 7.841449 & 8.524487 & 7.553432 \\\hline
\end{tabular}
\caption{The elapsed time for Example \ref{ex2} by using $\epsilon=10^{-6}$.}
\end{center}
\end{table}

\begin{table}[htb]
\begin{center}
\begin{tabular}{|c| c c c c|}\hline
 \text{Matrix No.} & \text{SM} & \text{CM} & \text{HM} & \text{PM} \\\hline\hline
 $H_{100\times90}$ & 0.055003 & 0.053003 & 0.048003 & 0.051003 \\
 $H_{200\times190}$ & 0.231013 & 0.233013 & 0.216012 & 0.227013 \\
 $H_{300\times290}$ & 0.463027 & 0.388022 & 0.412024 & 0.406023 \\
 $H_{400\times390}$ & 0.975056 & 0.901052 & 0.933053 & 0.942054 \\
 $H_{500\times490}$ & 1.629093 & 1.568090 & 1.643094 & 1.599092 \\
 $H_{600\times590}$ & 2.651152 & 2.502143 & 2.701154 & 2.471141 \\
 $H_{700\times690}$ & 3.801217 & 3.593205 & 3.992228 & 3.651209 \\
 $H_{800\times790}$ & 5.508315 & 5.130293 & 5.639323 & 5.044289 \\
 $H_{900\times890}$ & 7.192411 & 6.872393 & 7.779445 & 6.890394 \\
 $H_{1000\times990}$ & 9.508544 & 9.211527 & 10.687611 & 9.177525 \\\hline
\end{tabular}
\caption{The elapsed time for Example \ref{ex2} by using $\epsilon=10^{-7}$.}
\end{center}
\end{table}

We compared the efficiency of our iteration (\ref{PM}) and  the known methods. Like the known methods, our iteration converged consistently, but run faster, in good accordance with the formal analysis. Overall the test results in Tables 1-4 confirm some advantages of our iteration in terms of the order of convergence and computational time in most of the tested cases.

\begin{exm}\label{ex3}
Finally we compared the preconditioners obtained from our 
algorithm with the known preconditioners based on Incomplete LU factorizations \cite{Saad} and applied to the solution of the sparse linear systems, $Ax=b$, of the dimension 841 by using GMRES. The matrix $A$ has been chosen from MatrixMarket \cite{http} database as
\begin{equation}
\mathtt{A=ExampleData[{"Matrix","YOUNG1C"}]},
\end{equation}
with the right hand side vector  $b=(1,1,...,1)^T$. The solution in this case is given by
the vector  $(-0.0177027-0.00693171 I$, ..., $-0.0228083-0.00589176 I)^T$. Figure 1 
shows the plot of the matrix $A$ (note that this matrix is not tridiagonal), while Figure 2 reveals the effectiveness of our scheme for preconditioning.
\end{exm}

The left preconditioned system using $X_5$ of SM, $X_3$ of CM, and $X_1$ of PM, along with the well-known preconditioning techniques $\mathtt{ILU0}$, $\mathtt{ILUT}$ and $\mathtt{ILUTP}$ have been tested, while the initial vector has been chosen in all cases automatically, by the command  $\mathtt{LinearSolve[]}$ in Mathematica 10. The results of time comparisons for various values of  tolerance to the
residual norms have been shown in Figure 2.
  In our tests, as could be expected, the computational time increased as  tolerance decreased, but the preconditioner $X_1$ from the method PM mostly yielded the best feedbacks. For these tests, we used the following initial matrix from \cite{Tarazaga},
\begin{equation}\label{choice3}
X_0=diag(1/a_{11},1/a_{22},\cdots,1/a_{nn}),
\end{equation}
where $a_{ii}$ denoted the $i$th diagonal entry of $A$.

After a few iteration loops, the computed preconditioner of the Schulz-type methods
can be dense. Accordingly, we must choose a strategy for controlling the sparseness of the preconditioner. We  can  do this  by setting the Mathematica command $\mathtt{Chop[X,10^{-5}]}$, at the end of each cycle for these matrices.

\begin{figure}[htb]
\centering
\includegraphics[scale=0.7]{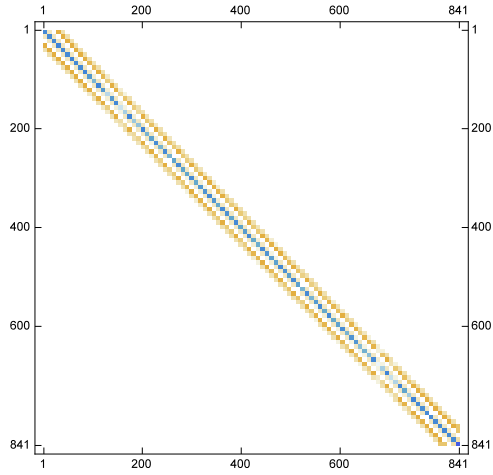}
\caption{The plot of the matrix $A$ in Example \ref{ex3}.}
\end{figure}
\centerline{}
\begin{figure}[htb]
\centering
\includegraphics[scale=0.7]
{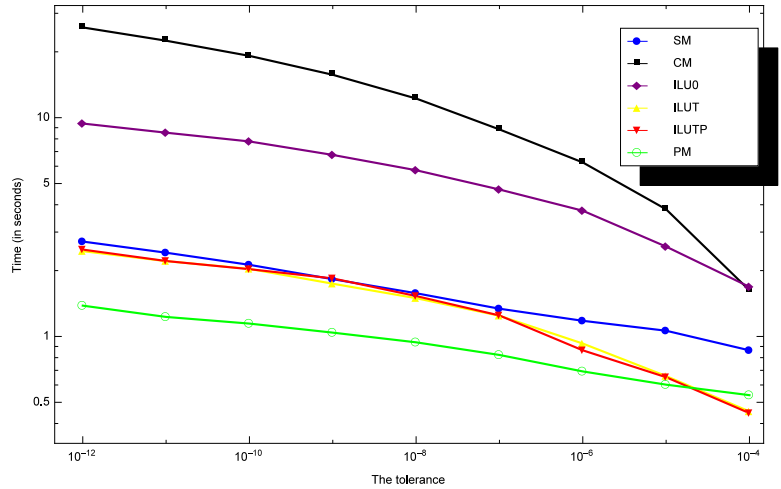}
\caption{The results of comparisons in terms of the computational time.}
\end{figure}

\section{Concluding comments}
The calculation of generalized inverse is an inalienable part of some important matrix computations (see our Section 1.1).

In this paper, we propose a  fast and numerically reliable 
iterative algorithm (\ref{PM}) for the outer generalized inverse $A^{(2)}_{T,S}$ of a matrix $A$. The algorithm has the eighteenth order of convergence and uses only seven matrix-by-matrix multiplications per iteration loop. 
This implies the record high computational efficiency index, 
$\approx 1.511.21$. 
As usual for the iterative algorithms of this class, 
our iteration is self-correcting for computing the 
 inverse of a nonsingular matrix, but not for  computing generalized inverses. 
For that task, the algorithm has mild numerical instability,
but at the expense of performing an extra matrix multiplication per iteration loop,
 we obtain  numerically stable algorithm, still having the eighteenth order of 
convergence. 
This greatly increases the previous record efficiency index, this time in the class 
of numerically stable iterations for generalized inverses.
The  results of our analysis and of   
 our tests indicate that our algorithms are quite promising for practical use in computations with both double and multiple precision. We found out that for high order methods such as (\ref{PM}), it is usually sufficient to perform one full cycle iteration
in order to produce an approximate inverse preconditioner.

Further increase of the convergence order and decrease of the number of matrix multiplications per iteration loop (both under the requirement of numerical stability
and with allowing mild instability) are  natural goals of our future research. 
We are going to extend it also to the acceleration of the known iterative algorithms for  
 various other matrix equations (cf. \cite{Higham2008}, \cite{Bini}).

 \end{document}